\newcommand{\llbracket}{[\![}
\newcommand{\rrbracket}{]\!]}
\newcommand{\Law}{\operatorname{Law}}
\newtheorem{theorem}{Theorem}[section]
\newtheorem{prp}{Proposition}[section]
\newtheorem{cor}{Corollary}[section]
\renewcommand{\i}{\mathrm{i}}
\newcommand{\ex}{\mathrm{e}}
\newcommand{\di}{\mathrm{d}}
\newcommand{\g}{\gamma}
\newcommand{\bI}{\mathbb{I}}
\newcommand{\bR}{\mathbb{R}}
\newcommand{\E}{\mathbf{E}}
\renewcommand{\P}{\mathbf{P}}
\newcommand{\cF}{\mathcal{F}}
\begin{document}
\begin{frontmatter}

\title{Small noise asymptotics and first passage times of integrated
Ornstein--Uhlenbeck processes driven by $\alpha$-stable L\'evy processes}
\runtitle{Integrated Ornstein--Uhlenbeck process}

\begin{aug}
\author{\fnms{Robert} \snm{Hintze}\thanksref{e1}\ead[label=e1,mark]{robert.hintze@uni-jena.de}} \and
\author{\fnms{Ilya} \snm{Pavlyukevich}\corref{}\thanksref{e2}\ead[label=e2,mark]{ilya.pavlyukevich@uni-jena.de}}
\runauthor{R. Hintze and I. Pavlyukevich} 
\address{Friedrich Schiller Universit\"at Jena, Fakult\"at f\"ur
Mathematik und Informatik, Institut f\"ur Stochastik,
Ernst Abbe Platz 2, 07743 Jena, Germany.
\printead{e1};\\ \printead*{e2}}
\end{aug}

\received{\smonth{5} \syear{2012}}
\revised{\smonth{11} \syear{2012}}

%
\begin{abstract}
In this paper, we study the asymptotic behaviour of one-dimensional
integrated Ornstein--Uhlenbeck \mbox{processes} driven by $\alpha$-stable
L\'evy processes of small amplitude. We prove that the integrated
Ornstein--Uhlenbeck process converges weakly to the underlying
$\alpha$-stable L\'evy process in the Skorokhod $M_1$-topology which
secures the weak convergence of first passage times. This result
follows from a more general result about approximations of an arbitrary
L\'evy process by continuous integrated Ornstein--Uhlenbeck processes
in the $M_1$-topology.
\end{abstract}

%
\begin{keyword}
\kwd{absolutely continuous approximations}
\kwd{$\alpha$-stable L\'evy process}
\kwd{first passage times}
\kwd{integrated Ornstein--Uhlenbeck process}
\kwd{Skorokhod $M_1$-topology}
\kwd{tightness}
\end{keyword}


\end{frontmatter}

\section{Introduction}

Consider a dimensionless Langevin equation for the motion of a particle
with a position $x^\varepsilon$
subject to a linear friction force $F=-A \dot x^\varepsilon$, $A>0$ (Stokes'
law for friction force) and a random noise
$\dot l$ of a small amplitude $\varepsilon>0$
%
\begin{equation}
\label{eqlang} \ddot x^\varepsilon=-A \dot x^\varepsilon+\varepsilon\dot
l.
\end{equation}
Denoting by $v^\varepsilon:=\dot x^\varepsilon$ the velocity process,
we understand this
equation as a two-dimensional
equation in a phase space $(v,x)$ which can be written in the integral
form as
%
\begin{eqnarray}
\label{eqv} v_t^\varepsilon&=&v_0-A \int
_0^t v_s^\varepsilon\,\di s+
\varepsilon l_t,
\\
\label{eqx} x_t^\varepsilon&=&x_0+\int
_0^t v_s^\varepsilon\,\di s, t
\geq0,\qquad x_0,v_0\in \mathbb{R}.
\end{eqnarray}
The study of the dynamics of $x^\varepsilon$ and $v^\varepsilon$ in
the Gaussian case,
that is, when $l=b$ is a standard Brownian motion,
has a long history.
In this case, the velocity process $v^\varepsilon$ is a
Ornstein--Uhlenbeck (OU) process, and the displacement process
$x^\varepsilon$,
being the integrated Ornstein--Uhlenbeck process,
is often referred to as Langevin's Brownian motion. For example, it is
well known that for zero initial conditions
$x_0=v_0=0$, strong friction and
large amplitude, $A=\varepsilon\to+\infty$, the displacement process
$x^\varepsilon$ can be considered as a good physical approximation of
the Brownian motion (see, e.g., Chapter 2 in Horsthemke and
Lefever \cite{HorLef84}).

The dynamics of the integrated OU processes driven by non-Gaussian
L\'evy processes attracted attention recently in financial mathematics
in the context of stochastic volatility models, see Barndorff-Nielsen
\cite{Bar98}, and Barndorff-Nielsen and Shephard \cite{BarShe03}.
Garbaczewski and Olkiewicz \cite{GarOlk00} studied integrated OU
processes driven by a 1-stable (Cauchy) process. Al-Talibi, Hilbert and
Kolokoltsov \cite{altalibi10} established convergence in probability of
marginals of an integrated OU process driven by an $\alpha$-stable
L\'evy process in the limit of large friction parameter. Chechkin,
Gonchar and Szyd{\l}owski \cite{ChechkinGS02} studied the equation
(\ref{eqlang}) (with $\varepsilon=1$, in two- and three-dimensional
setting) in a model of plasma in an external constant magnetic field
and subject to an $\alpha$-stable L\'evy electric forcing.

Our present research is mainly motivated by the this paper and focuses
on the first passage times of
the displacement process $x^\varepsilon$ in the limit of small
amplitude $\varepsilon\to0$ under the assumption that the driving
process $l$
is a non-Gaussian $\alpha$-stable L\'evy process.
We refer the reader to the works by
Lefebvre \cite{Lef89} and Hesse \cite{Hesse91} where first passage
problems for integrated Ornstein--Uhlenbeck processes driven by
Brownian motion
were studied.

Let us briefly describe the outline of the paper. First, we shall show
that on a certain $\varepsilon$-dependent time
scale the integrated OU process $A x^\varepsilon$ weakly converges to the
driving process $l$ in the sense of finite-dimensional
distributions. Further, we shall establish a
stronger convergence of the processes in an appropriate topology. We
notice that since the
driving $\alpha$-stable L\'evy process has c\`adl\`ag paths and the
integrated Ornstein--Uhlenbeck process
is absolutely continuous, no convergence in the uniform topology or in
the Skorokhod $J_1$-topology is possible.
Thus, we prove the convergence in the weaker Skorokhod $M_1$-topology
which is still strong enough to
ensure the continuity of the running supremum or the inverse function
of a process, and to guarantee the convergence
of the first passage times. As a by-product, we obtain
an approximation result for an arbitrary L\'evy process by absolutely
continuous integrated OU processes in the
$M_1$-topology.

\section{Object of study and main result}

Let $(\Omega, \cF,\mathbb F,\P)$ be a filtered probability space
satisfying the usual conditions. On this probability
space, consider a L\'evy process $l$ with c\`adl\`ag paths and a
L\'evy--Khintchine representation $\E\ex^{\i u l_t}=\ex^{t \Psi
(u)}$, where
%
\begin{equation}
\label{eqPhi} \Psi(u)=-\frac{\sigma^2}{2}u^2+\i\mu u+\int
_{\bR\setminus\{0\}} \bigl(\ex^{\i u y} -1-\i u y\bI_{\{|y|\leq1\}}
\bigr) \nu(\di y) ),\qquad u\in\bR,
\end{equation}
with $\sigma\geq0$, $\mu\in\bR$, and a jump measure $\nu$ satisfying
the conditions
$\nu(\{0\})=0$ and $\int_{\bR\setminus\{0\}}(y^2\wedge1) \nu
(\di
y)<\infty$. In particular, we shall be interested in
non-Gaussian strictly $\alpha$-stable L\'evy processes $l^{(\alpha
)}=(l^{(\alpha)}_t)_{t\geq0}$, $\alpha\in(0,2)$, for
which the closed-form representation of the
characteristic exponent $\Psi$ is known to be equal to
\[
\Psi(u)= \cases{\displaystyle  -c|u|^\alpha \biggl( 1-\i\beta\operatorname
{sign}(u)\tan \frac{\uppi \alpha}{2} \biggr), &\quad $\alpha\in(0,1)\cup(1,2)$,
\vspace*{2pt}\cr
\displaystyle -c |u|
\biggl( 1+\i\beta\frac{2}{\uppi } \operatorname {sign}(u)\ln u \biggr), &\quad $
\alpha= 1, u\in\bR$, }
\]
$c>0$ and $\beta\in[-1,1]$ being a scale and skewness parameters (see,
e.g., Theorem 14.15 in Sato \cite{Sato-99}).
The well-known self-similarity property of $l^{(\alpha)}$ will be used
in the following:
$\Law(\varepsilon l^{(\alpha)}_{t/\varepsilon^\alpha}, t\geq
0)=\Law(l^{(\alpha)}_t,
t\geq0)$ for any $\varepsilon>0$.

For any $A>0$, $\varepsilon>0$, any $v_0$, $x_0\in\bR$, and any L\'evy
process $l$ (and in particular an $\alpha$-stable L\'evy process) there
exists a path-wise unique strong solution of the linear stochastic
differential equation (\ref{eqv}) given by
%
\begin{equation}
\label{eqvsol} v^\varepsilon_t=v_0\ex^{-A t}+
\varepsilon\int_0^t \ex^{-A(t-s)}
\,\di l_s,
\end{equation}
where the last integral is a L\'evy--Wiener integral (see Chapter 4.3.5
in Applebaum \cite{Applebaum09}).
It is helpful to recall another representation of $v^\varepsilon$
which is
obtained with the help of integration by parts, namely
%
\begin{equation}
\label{eqintparts} v^\varepsilon_t=v_0
\ex^{-A t}+\varepsilon l_t-\varepsilon A\int
_0^t \ex^{-A(t-s)}l_s
\,\mathrm{d}s.
\end{equation}
It is clear, that the process $v^\varepsilon$ is also c\`adl\`ag and
its jumps
coincide with the jumps of the driving process $\varepsilon l$.

The equation (\ref{eqx}) for the displacement process $x^\varepsilon
$ can be
also solved explicitly. Applying the Fubini theorem
we obtain
%
\begin{eqnarray}
\label{eqxsol} x^\varepsilon_t&=&x_0+\int
_0^t v^\varepsilon_s \,\di s
=x_0+\int_0^t \biggl[
v_0\ex^{-A s}+\varepsilon\int_0^s
\ex^{-A(s-u)} \,\di l_u \biggr] \,\di s
\nonumber
\\
&=&x_0+\frac{v_0}{A}\bigl(1-\ex^{-A t}\bigr) +\varepsilon
\int_0^t \biggl[ \int_u^t
\ex^{-A(s-u)} \,\di s \biggr] \,\di l_u
\\
&=&x_0+\frac{v_0}{A}\bigl(1-\ex^{-A t}\bigr) +
\frac{\varepsilon}{A} \int_0^t \bigl(1-
\ex^{-A(t-u)} \bigr) \,\di l_u.
\nonumber
\end{eqnarray}
From now on, we set the initial conditions $x_0=v_0=0$.
At the end of the Section \ref{s3}, we discuss the generalization of
the results to the case of arbitrary initial conditions.

For a real valued stochastic process $y=(y_t)_{t\geq0}$ and $a>0$, let
$\tau_a(y)$ denote the first passage time
\[
\tau_a(y)=\inf\{t\geq0\dvt y_t>a\}.
\]
The main goal of this paper is to study the law of the first passage
times $\tau_a(x^\varepsilon)$ of the displacement process
$x^\varepsilon$
in the limit $\varepsilon\to0$.

The asymptotics of $\tau_a(x^\varepsilon)$ can be determined in an especially
simple way in the case of an integrated
OU process driven by a standard Brownian motion $l=b$, that is a
strictly $2$-stable continuous L\'evy process with the
characteristic exponent $\Psi(u)=-u^2/2$, $u\in\bR$.

Consider the Polish space $C([0,\infty),\bR)$ of real-valued continuous
functions endowed with the topology $U$ of local uniform
convergence
associated with the metric
\[
d_U\bigl(x,x'\bigr):=\int_0^\infty
\ex^{-T}\Bigl(1\wedge\sup_{t\in
[0,T]}\bigl|x_t-x'_t\bigr|
\Bigr) \,\di T,\qquad x,x'\in C\bigl([0,\infty)\bigr).
\]
The following result about the weak convergence (denoted in the sequel
by `$\Rightarrow$') of integrated OU processes to the Brownian motion is
well known and is presented here for the sake of
completeness.
%
%
\begin{prp}
\label{pbr}
Let $l=b=(b_t)_{t\geq0}$ be a standard Brownian motion, and
let $x^{\varepsilon}$ be the integrated OU process satisfying the equations
(\ref{eqv}) and (\ref{eqx}) with zero initial conditions.
Then, for any $A>0$,
\[
\bigl(A x^\varepsilon_{t/\varepsilon^2}
\bigr)_{t\geq
0}\Rightarrow(b_t)_{t\geq0}
\]
in $C([0,\infty),\bR;U)$ as $\varepsilon\to0$.
\end{prp}
\begin{pf}
According to (\ref{eqxsol}),
the process $Ax^\varepsilon$ is determined explicitly as
\[
A x^\varepsilon_t= \varepsilon\int_0^t
\bigl(1- \ex^{- A(t-s)}\bigr) \,\di b_s.
\]
Applying the time change $t\mapsto\frac{t}{\varepsilon^2}$ and using the
self-similarity of the Brownian
motion $b$, $\Law(\varepsilon b_{t/\varepsilon^2}, t\geq0)=\Law
(b_t, t\geq0)$, we
obtain that for any $\varepsilon>0$
the process $(Ax^\varepsilon_{t/\varepsilon^2})_{t\geq0}$
coincides in law with the process $AX^\varepsilon$ given by the
convolution integral
%
\begin{equation}
\label{eqAB} AX^\varepsilon_t=\int_0^t
\bigl(1-\ex^{-{A}(t-s)/{\varepsilon^2}}\bigr) \,\di B_s,
\end{equation}
where $B$ is another standard Brownian motion.
We show that the process
\[
Y_t^\varepsilon=AX^{\varepsilon}_t- B_t=
\int_0^t \ex^{-
{A}(t-s)/{\varepsilon^2}} \,\di B_s
\]
converges to zero in probability as $\varepsilon\to0$ uniformly over
$t\in
[0,T]$ for any $T>0$.
Indeed, $Y^\varepsilon$ is a centred Gaussian process with the variance
\[
\E\bigl|Y_t^\varepsilon\bigr|^2= \int_0^t
\ex^{-{2A}(t-s)/{\varepsilon
^2}} \,\di s= \frac{\varepsilon^2}{2A}\bigl(1-\ex^{-{2A}t/{\varepsilon^2}}\bigr)\leq
\frac{\varepsilon^2}{2A},\qquad t\in[0,T]. 
\]
Applying Theorem 5.3 from Adler \cite{Adler90},
we conclude that for any $T>0$ there is an absolute constant $C>0$ such
that for any $\Delta>0$
\[
\P \Bigl(\sup_{t\in[0,T]} \bigl|Y^\varepsilon_t \bigr|>\Delta \Bigr) \leq C\Delta
\biggl(1-\Phi \biggl(\frac{2A\Delta}{\varepsilon^2 } \biggr) \biggr)\to
0,\qquad \varepsilon\to0,
\]
where $\Phi$ is the probability distribution function of a standard
Gaussian random variable.
The weak convergence of $(Ax^\varepsilon_{t/\varepsilon^2})_{t\geq
0}\Rightarrow b$ in
$C([0,\infty),\bR;U)$ follows
immediately from the convergence in probability.
\end{pf}

It is clear that the law of the first passage time $\tau_a(x^\varepsilon)$ is
determined with the help of the running supremum
of the process $x^\varepsilon$,
\[
S_t^\varepsilon:=\sup_{s\in[0,t]} x^\varepsilon_s,\qquad
t\geq0,
\]
namely $\P(\tau_a(x^\varepsilon)\leq t)=\P(S_t^\varepsilon\geq a)$.
Since the running supremum of a continuous process is a continuous
mapping in $C([0,\infty),\bR;U)$, we obtain the convergence in law of the
first passage times.
%
\begin{cor}
\label{cbr}
For any $a>0$
\[
\varepsilon^2 \tau_a\bigl(x^\varepsilon\bigr)
\stackrel{d} {\to} \tau_{
{a}/{A}}(b)\qquad\mbox{as }\varepsilon \to0.
\]
\end{cor}
The probability density of the first passage time $\tau_{
{a}/{A}}(b)$ is well known,
\[
\P\bigl(\tau_{{a}/{A}}(b)\leq t\bigr)=\frac{a}{A\sqrt{2\uppi }}\int
_0^t \frac
{1}{s^{3/2}}\ex^{-{a^2}/({2A^2 s})} \,\di s.
\]

If the driving L\'evy process $l=l^{(\alpha)}$ is $\alpha$-stable and
non-Gaussian,
the situation becomes more complicated.
Consider the space $D([0,\infty),\bR)$ of real valued c\`adl\`ag functions.
We shall see in Proposition \ref{pfdd} that $Ax^\varepsilon_{\cdot
/\varepsilon^\alpha}$
converges to $l^{(\alpha)}$
in the sense of finite-dimensional distributions whereas
the integrated OU process $Ax^\varepsilon$ is absolutely continuous.
Thus no weak convergence in the space $D([0,\infty),\bR)$ equipped with
the topology of the local uniform convergence is possible.

In his seminal paper, Skorohod \cite{Skorokhod56} introduced four
weaker topologies on the space $D([0,\infty),\bR)$ different
from the uniform topology.
The most frequently used topology $J_1$ is designed to match the jump
times and sizes
of the approximating processes and their limit,
and does not fit in with our setting. Thus, we shall prove convergence
in the weaker $M_1$-topology
which is still strong enough
to guarantee the continuity of the supremum, and thus the convergence
of the first passage times.
Essentially this topology linearises the jumps through the introduction
of a fictitious
time-scale and is appropriate for establishing the convergence of a
sequence of continuous processes
to a discontinuous limit.
It is also worth mentioning that the idea of a fictitious time-scale
has been used in
some other contexts, see Williams \cite{Williams01} and the references therein.
A very detailed treatment of the $M_1$-convergence
can be found in the monograph by
Whitt \cite{whitt02}.

Let us recall the necessary facts about the space $D([0,\infty),\bR)$
endowed with the non-uniform topology $M_1$.
For\vspace*{1pt} any function
$x\in D([0,\infty),\bR)$ and for any $T>0$ define a \textit{completed
graph} $\Gamma_x^T$ of the restriction of $x$ on $[0,T]$ as a set
\[
\Gamma_x^T:=\bigl\{(x_0,0)\bigr\}\cup\bigl
\{(z,t)\in\bR\times(0,T]\dvt z=c x_{t-}+(1-c)x_t\mbox{ for
some } c, c\in[0,1]\bigr\}.
\]
The completed graph is a subset in $\bR^2$ containing the graph of $x$
on $[0,T]$ as well as
the line segments connecting the points of discontinuity $(x_{t-},t)$
and $(x_t,t)$.
On a completed graph we introduce an order saying that
$(z,t)\leq(z',t')$ if either $t< t'$ or $t=t'$ and \mbox{$|x_{t-}-z|\leq
|x_{t-}-z'|$}.
A parametric representation of the graph is a continuous
mapping\vspace*{1pt}
$(z_u,t_u)\dvt[0,1]\to\Gamma_x^T$,
which in non-decreasing w.r.t. order on the completed graph. Denote
$\Pi_x^T$ the set of
all parametric representations of the graph $\Gamma_x^T$. The Skorokhod
$M_1$-topology in $D([0,\infty),\bR)$ is then induced
by the metric
\begin{eqnarray*}
d_{M_1}\bigl(x,x'\bigr)&:=&\int_0^\infty
\ex^{-T} \bigl(1\wedge d_{M_1,T}\bigl(x,x'\bigr)
\bigr) \,\di T,
\\
d_{M_1,T}\bigl(x,x'\bigr)&:=&\mathop{\inf_{(z,t)\in\Pi_{x}^T,}}_{(z',t')\in
\Pi
_{x'}^T}
\max_{u\in[0,1]}\bigl\{ \bigl|z_u-z'_u\bigr|,
\bigl|t_u-t'_u\bigr| \bigr\},\qquad x,x'\in
D\bigl([0,\infty),\bR\bigr), T>0,
\end{eqnarray*}
see Whitt \cite{whitt02}, Sections 3.3, 12.3 and 12.9. One can
construct a metric equivalent to $d_{M_1}$, for which the
space $D([0,\infty),\bR; M_1)$ is Polish, see Whitt \cite{whitt02},
Section 12.8.

The main result of this paper is the following convergence result.
%
\begin{theorem}
\label{tm}
Let $l^{(\alpha)}=(l^{(\alpha)}_t)_{t\geq0}$ be an $\alpha$-stable
L\'evy process, $\alpha\in(0,2)$ and
let $x^{\varepsilon}$ be the integrated OU process satisfying the equations
(\ref{eqv}) and (\ref{eqx}) with zero initial conditions.
Then
\[
\bigl(A x^\varepsilon_{{t}/{\varepsilon^\alpha}} \bigr)_{t\geq0} \Rightarrow
\bigl(l^{(\alpha)}_t\bigr)_{t\geq0}
\]
in $D([0,\infty),\bR; M_1)$ as $\varepsilon\to0$.
\end{theorem}

The convergence of the first passage times follows immediately.
%
\begin{cor}
Let $l^{(\alpha)}$ be an $\alpha$-stable process with $\limsup_{t\to
\infty} l^{(\alpha)}_t=+\infty$ a.s.
Then, for any $a>0$
\[
\varepsilon^\alpha\tau_a\bigl(x^\varepsilon\bigr)
\stackrel{d} {\to}\tau_{{a}/{A}}\bigl(l^{(\alpha)}\bigr) \qquad\mbox{as }
\varepsilon\to0.
\]
\end{cor}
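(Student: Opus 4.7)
The plan is to combine Theorem~\ref{t:m} with the continuous-mapping theorem applied to the first-passage-time functional on $D([0,\infty),\bR;d_{M_1})$. Let $\Phi_c(\phi):=\inf\{s\geq 0 : \phi(s)>c\}$ denote the first-passage functional at level $c$.

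\emph{Step 1 (scaling identity and reduction).} First I would perform the change of variable $s=\e^\alpha t$ in $\tau_a(x^\e)=\inf\{t\geq 0 : x^\e_t>a\}$ to obtain the pathwise identity
\[
\e^\alpha\tau_a(x^\e)=\inf\{s\geq 0 : x^\e_{s/\e^\alpha}>a\}=\Phi_a(X^\e),\qquad X^\e_s:=x^\e_{s/\e^\alpha}.
\]
By Theorem~\ref{t:m}, $AX^\e\Rightarrow l^{(\alpha)}$ in $D([0,\infty),\bR;d_{M_1})$, so $X^\e\Rightarrow l^{(\alpha)}/A$ in the same topology, since scalar multiplication by $1/A$ is $d_{M_1}$-continuous.

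\emph{Step 2 ($M_1$-continuity of $\Phi_a$ at $l^{(\alpha)}/A$).} The functional $\Phi_c$ is not globally $d_{M_1}$-continuous on $D([0,\infty),\bR)$, but is continuous at every c\`adl\`ag $\phi$ that crosses level $c$ ``strictly,'' meaning $\phi$ does not linger at $c$ on a positive-length interval before the crossing, and the crossing is achieved either by a jump with $\phi(\Phi_c(\phi)-)<c\leq\phi(\Phi_c(\phi))$ or by a strictly monotone continuous passage; see Whitt~\cite{whitt02}, Theorem~13.6.4 and Corollary~13.6.5. I would then verify the strict-crossing property at level $a$ for $l^{(\alpha)}/A$ almost surely. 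The verification rests on two standard facts about non-degenerate $\alpha$-stable L\'evy processes: (i) their one-dimensional distributions are absolutely continuous, so $\P(l^{(\alpha)}_s = Aa)=0$ for every $s>0$; (ii) the running supremum of $l^{(\alpha)}$ does not spend positive Lebesgue time at any fixed value. A case analysis according to the signs of $c_\pm$ and the value of $\alpha$ (treating $\alpha<1$, $\alpha=1$, $\alpha>1$, and the spectrally one-sided subcases) confirms that the first passage of $l^{(\alpha)}/A$ to level $a$ is almost surely a strict crossing, hence a continuity point of $\Phi_a$.

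\emph{Step 3 (conclusion via continuous mapping).} Combining Steps~1--2 with the continuous-mapping theorem gives
\[
\e^\alpha\tau_a(x^\e)=\Phi_a(X^\e)\stackrel{d}{\to}\Phi_a(l^{(\alpha)}/A)=\tau_{a/A}(l^{(\alpha)}),
\]
the required convergence in distribution. The principal obstacle lies in Step~2: while the strict-crossing property is intuitively clear for non-degenerate $\alpha$-stable processes, a rigorous verification requires the above-mentioned case analysis together with the assumption $\limsup_{t\to\infty}l^{(\alpha)}_t=+\infty$ a.s.\ to ensure $\Phi_a(l^{(\alpha)}/A)$ is a.s.\ finite and the first passage is at a continuity point of the functional. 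Once Step~2 is in hand, Steps~1 and 3 are routine.
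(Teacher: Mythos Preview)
Your proposal is correct and follows the same overall strategy as the paper---applying the continuous-mapping theorem in the $M_1$-topology after the time rescaling of Step~1---but the factorization differs. The paper writes $\tau_a$ as the composition of the running-supremum map $S$ with the inverse map $I$, invokes the $M_1$-continuity of $S$, and then cites Lemma~2.1 of Puhalskii and Whitt~\cite{PuhWhi97} for the $M_1$-continuity of $I$ on nondecreasing unbounded functions vanishing at~$0$ (this is where the hypothesis $x_0=v_0=0$ enters). Your approach instead applies Whitt's Theorem~13.6.4 directly to the first-passage functional $\Phi_a$, which obliges you to verify the almost-sure strict-crossing property of $l^{(\alpha)}/A$ at level~$a$. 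The paper's factorization does not truly avoid this issue: to pass from $M_1$-convergence of the inverse processes to convergence of their values at the fixed level~$a$ one still needs $a$ to be a.s.\ a continuity point of the limiting inverse, which is precisely the strict-crossing condition. So your Step~2 makes explicit an ingredient that the paper leaves implicit in its citation. Either route works; yours is a bit more transparent about the probabilistic input required from the limit process, while the paper's is terser.
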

\begin{pf}
As in Corollary \ref{cbr}, we define the first exit time with the help
of the running supremum
$S_t^\varepsilon:=\sup_{s\in[0,t]} x^\varepsilon_t$, $t\geq0$,
and the inverse function
$I_t^\varepsilon:=\inf\{s\geq0\dvt S^\varepsilon_s>t\}$, $t\geq0$.
Under the condition $x_0=v_0=0$, the inverse function is continuous in
the $M_1$-topology,
see Puhalskii and Whitt \cite{PuhWhi97}, Lemma 2.1. The continuous
mapping theorem yields the result.
\end{pf}

In contrast to the Brownian case, the laws of the first passage times
$\tau_{a}(l^{(\alpha)})$ of an
$\alpha$-stable L\'evy process are often not known explicitly.
We refer the reader to the recent works by Kuznetsov \cite
{Kuznetsov11} and
Simon \cite{Simon11},
and references therein for various results on this topic.

The rest of the paper is organized as follows. Since the $\alpha
$-stable case can be studied with the help of an appropriate
time change, which transforms the small noise amplitude into the big
friction parameter,
we shall study the $M_1$-convergence of continuous integrated OU
processes with big friction parameter driven by arbitrary
L\'evy processes. This result
can be of its own interest. Finally, we prove Theorem \ref{tm} and
discuss the case of arbitrary initial conditions.

\section{Absolutely continuous aproximations of L\'evy processes in
$M_1$-topology and the proof of the main result}\label{s3}

For $A>0$, $\gamma\geq0$,
and a real valued L\'evy process $L$ with a characteristic exponent
$\Psi$ given by (\ref{eqPhi})
we study the system of stochastic differential equations
%
\begin{eqnarray}
\label{eqVg} V_t^\gamma&=&-\gamma A\int_0^t
V_s^\gamma\,\di s+L_t,
\\
\label{eqXg} X^\gamma_t&=&\gamma\int_0^t
V_s^\gamma\,\di s.
\end{eqnarray}
First, we prove the convergence of finite-dimensional
marginals of $AX^\gamma$ to those of $L$ in probability.
%
%
\begin{prp}
\label{pfdd}
For any $m\geq1$ and $0\leq t_1<\cdots<t_m<\infty$,
\[
\bigl(A X_{t_1}^\gamma,\ldots, A X^\gamma_{t_m}
\bigr) \stackrel{\P} {\to} (L_{t_1},\ldots, L_{t_m} )\qquad\mbox{as
}\gamma\to \infty.
\]
\end{prp}
\begin{pf}
With the help of the formulae (\ref{eqvsol}) and (\ref{eqxsol}) one
can solve the equations
(\ref{eqVg}) and (\ref{eqXg}) explicitly:
%
\begin{equation}
\label{eqAX} V_t^\gamma= 
\int
_0^t \ex^{-\gamma A (t-s)} \,\di L_s,\qquad
AX^\g_t = \int_0^t
\bigl(1- \ex^{-\gamma A(t-s)} \bigr) \,\di L_s.
\end{equation}
It is clear that the processes $AX^\gamma$ and $V^\gamma$ start at the
origin a.s.,
$A X^\gamma_0=V^\gamma_0=L_0=0$.
For $m\geq1$ fix the time instants $0=t_0<t_1<\cdots<t_m<\infty$
and real numbers $u_0, u_1,\ldots,u_m$ and
consider the characteristic function
%
\begin{equation}
\label{eqchF} \E\exp \Biggl(\i\sum_{k=0}^m
u_k \bigl(A X^\gamma_{t_k}-L_{t_{k}}
\bigr) \Biggr) =\E\exp \Biggl(- \i\sum_{k=1}^m
u_k\int_0^{t_k} \ex^{-\gamma
A(t_k-s)}
\,\di L_s \Biggr).
\end{equation}
We represent the sum in the last exponent as a sum of independent
random variables
\begin{eqnarray*}
\sum_{k=1}^m u_k \bigl(A
X^\gamma_{t_k}-L_{t_k}\bigr)&=&-\sum
_{k=1}^m u_k \int_{0}^{t_1}
\ex^{-\gamma A(t_k-s)} \,\di L_s
\\
&&{}- \sum_{k=2}^m u_k \int
_{t_1}^{t_2} \ex^{-\gamma A(t_k-s)} \,\di L_s-
\cdots -u_m \int_{t_{m-1}}^{t_m}
\ex^{-\gamma A(t_m-s)} \,\di L_s
\end{eqnarray*}
and show that the characteristic function of every summand converges to
$1$ as $\gamma\to\infty$.
Fix an index $j$, $1\leq j\leq m$. Then by a well-known formula for
characteristic functions of
convolution integrals w.r.t. a L\'evy process (see, e.g.,
Lemma 17.1 in Sato \cite{Sato-99}) we obtain
the equality
%
\begin{eqnarray}
\label{eqlt}
&&\ln\E\exp \Biggl(-\i\sum_{k=j}^m
u_k \int_{t_{j-1}}^{t_j} \ex^{-\gamma
A(t_k-s)}
\,\di L_s \Biggr)\nonumber\\[-8pt]\\[-8pt]
&&\quad = \int_{t_{j-1}}^{t_j} \Psi
\Biggl( -\sum_{k=j}^m u_k
\ex^{-\gamma
A(t_k-s)} \Biggr) \,\di s.\nonumber
\end{eqnarray}
For brevity, we denote the argument
\[
u^\gamma_j(s):=-\sum_{k=j}^m
u_k \ex^{-\gamma A(t_k-s)},\qquad t_{j-1}\leq s\leq
t_j,
\]
and $u^*=\sum_{k=1}^m |u_k|<\infty$. Clearly, $|u^\gamma_j(s)|\leq
\sum_{k=j}^m |u_k|\leq u^*$ for $s\in[t_{j-1}, t_j]$, $1\leq j\leq m$.
The exponent $\Psi(u)$ is continuous and bounded on $[-u^*,u^*]$.
On each of the intervals $[t_{j-1},t_j)$, $1\leq j\leq m$, we determine
the pointwise limit of $\Psi(u^\gamma_j(s))$ as
$\gamma\to\infty$, namely
\[
\lim_{\gamma\to\infty} \Psi\bigl(u^\gamma_j(s)\bigr)=
\Psi(0)=0.
\]
After applying the Lebesgue dominated convergence theorem to the
right-hand side of (\ref{eqlt}),
we conclude that the term on the left-hand side of (\ref{eqchF}) tends
to $1$ as $\gamma\to\infty$, and as a well-known
consequence the convergence in probability
\[
\bigl(A X_0^\gamma,A X_{t_1}^\gamma,\ldots,
A X^\gamma_{t_m} \bigr) \stackrel{\P} {\to}
(L_0,L_{t_1},\ldots, L_{t_m} )
\]
holds as $\gamma\to\infty$.
\end{pf}

In the proof of the next Theorem \ref{thapp} about the convergence of
$A X^\gamma$ to $L$ in the $M_1$-topology, we
shall make use of the following oscillation function.
For $x,y\in\bR$ denote the segment $\llbracket x,y\rrbracket:=\{z\in
\bR
\dvt z=x+c(y-x), c\in[0,1]\}$
and introduce the oscillation
function $M\dvtx\bR^3\to[0,\infty)$,
\[
M(x_1,x,x_2):=\cases{\displaystyle  \min\bigl\{|x-x_1|,|x_2-x|
\bigr\},&\quad if $x\notin\llbracket x_1,x_2\rrbracket $,
\vspace*{2pt}\cr
0, &
\quad$x\in\llbracket x_1,x_2\rrbracket$. }
\]
In other words, $M(x_1,x,x_2)$ is the Euclidean distance between the
point $x$ and the segment $\llbracket x_1,x_2\rrbracket$.

Now we prove the main result of this section.
%
\begin{theorem}
\label{thapp}
Let $L$ be an arbitrary real valued L\'evy process, and $X^\gamma$ be a
solution of (\ref{eqXg}). Then for any $A>0$,
\[
A X^\gamma\stackrel{\P} {\to}L\qquad\mbox{in } D\bigl([0,\infty),\bR;
M_1\bigr)\mbox{ as }\gamma\to\infty.
\]
\end{theorem}
\begin{pf}
1. First, with the help of the L\'evy--It\^o decomposition we represent
$L$ as a sum of a
continuous Brownian motion $\sigma B$ and a L\'evy process $Z$ without
Gaussian part.
Due to the linearity of the Langevin equation, we represent the
solution $V^\gamma$ as a sum
\[
V^\gamma_t=\sigma\int_0^t
\ex^{-\gamma A(t-s)} \,\di B_s +\int_0^t
\ex^{-\gamma A(t-s)} \,\di Z_s,
\]
and consequently the process $AX^\gamma$ as a sum of two continuous processes
%
\begin{eqnarray}
\label{eqsum} A X^\gamma_t&=& AX^{\gamma, B}_t+
AX^{\gamma, Z}_t \nonumber\\[-8pt]\\[-8pt]
&:=& \sigma\int_0^t
\bigl(1- \ex^{-\gamma A(t-s)}\bigr) \,\di B_s
+\int
_0^t \bigl(1-\ex^{-\gamma A(t-s)}\bigr) \,\di
Z_s.\nonumber
\end{eqnarray}
In Proposition \ref{pbr}, we proved that $AX^{\gamma,B}$ converges to
$\sigma B$ in probability in the local uniform topology
(see (\ref{eqAB}) with $\gamma=\frac{1}{\varepsilon^2}$),
and consequently in the $M_1$-topology.
Since $\sigma B$ is continuous, due to Corollary 12.7.1
in Whitt \cite{whitt02} it is sufficient to prove the $M_1$-convergence
of $AX^{\gamma, Z}$ to $Z$.
The convergence in probability of finite-dimensional marginals of
$AX^{\gamma, Z}$ follows from Proposition \ref{pfdd}.
The L\'evy process $Z$ is stochastically continuous at any $T\geq0$,
so that due to
Section 3 in the original paper by Skorohod \cite{Skorokhod56} or
Chapter 12 in Whitt \cite{whitt02} for the convergence in $D([0,\infty
),\bR;M_1)$
it is sufficient to establish
the boundedness of the family $\{AX^{\gamma, Z}\}$, that is to show
that for every $T>0$
%
\begin{equation}
\label{eqbound} \lim_{K\to\infty}\sup_{\gamma>0}\P \Bigl(
\sup_{t\in
[0,T]}\bigl|AX^{\gamma,Z}_t\bigr|>K \Bigr)=0;
\end{equation}
and
to control the oscillation function, that is to show that for every
$T>0$ and $\Delta>0$
%
\begin{equation}
\label{eqM} \lim_{\delta\downarrow0}\limsup_{\gamma\to\infty}\P \Bigl(\mathop{
\sup_{0\leq t_1<t<t_2\leq T,}}_{t_2-t_1\leq\delta} M\bigl(AX^{\gamma,Z}_{t_1},AX^{\gamma,Z}_t,AX^{\gamma,Z}_{t_2}
\bigr)>\Delta \Bigr)=0.
\end{equation}
Without loss of generality, we assume from now on that $A=1$. Let $T>0$
be fixed.

2. For the proof of (\ref{eqbound}), we use the representation (\ref
{eqAX}) of $X^{\gamma,Z}$. Integrating
by parts (compare with (\ref{eqintparts})) yields
\[
X^{\gamma,Z}_t=Z_t-\int_{0}^t
\ex^{-\gamma(t-s)} \,\di Z_s=\gamma \int_0^t
\ex^{-\gamma(t-s)}Z_s \,\di s.
\]
Thus for any $\gamma\geq0$, we obtain the estimate
%
\begin{equation}
\label{eqXZ} \sup_{t\in[0,T]}\bigl|X^{\gamma,Z}_t\bigr|\leq
\sup_{t\in[0,T]}|Z_t|\sup_{t\in
[0,T]}\gamma\int
_{0}^t \ex^{-\gamma(t-s)} \,\di s \leq
\sup_{t\in[0,T]} |Z_t|,
\end{equation}
so that the condition (\ref{eqbound}) holds true.

3. We now prove the estimate (\ref{eqM}). Let $\Delta>0$ be fixed. We
show that
for any $\theta>0$ there is $\delta_0=\delta_0(\Delta, \theta, T)$ such
that for any
$\delta\in(0,\delta_0]$ there is $\gamma_0=\gamma_0(\delta,
\Delta,
\theta, T)$ such that for all
$\gamma>\gamma_0$ the inequality
\[
\P \Bigl(\mathop{\sup_{0\leq t_1<t<t_2\leq T,}}_{t_2-t_1\leq\delta} M\bigl(
X^{\gamma,Z}_{t_1}, X^{\gamma,Z}_t,
X^{\gamma,Z}_{t_2} \bigr)>\Delta \Bigr)\leq\theta
\]
holds true. The proof of this inequality will consist of three steps.

\textit{Step} 1. \textit{Reduction to a compound Poisson process with drift.}
First, we decompose $Z$ into a sum of a martingale with bounded jumps
and small variance and a compound
Poisson process with drift.

Let $a=a(\Delta, \theta,T)\in(0,1]$ be such that $\nu(\{a\})= \nu
(\{-a\}
)=0$ and
\[
\frac{16T}{\Delta^2}\int_{\{|y|<a\}}y^2 \nu(\di y)\leq
\frac
{\theta}{4}.
\]
For this $a$, denote
\[
\mu_a:=\mu-\int_{a\leq|y|\leq1} y \nu(\di y)
\]
and consider the processes
\[
\eta_t:=\sum_{s\leq t}\Delta
Z_s\bI\bigl(|\Delta Z_s|\geq a\bigr)+\mu_a t
\quad\mbox{and}\quad \xi_t:=Z_t-\eta_t.
\]
The processes $\xi$ and $\eta$ are independent L\'evy processes with
the respective L\'evy--Khintchine representations
\begin{eqnarray*}
\E\ex^{\i u \xi_1} &=&\exp \biggl(\int_{\{|y|< a\}}\bigl(
\ex^{\i uy}-1-\i uy\bigr) \nu(\di y) \biggr),
\\
\E\ex^{\i u \eta_1} &=&\exp \biggl(\int_{\{|y|\geq a\}}\bigl(
\ex^{\i uy}-1\bigr) \nu(\di y) +\i\mu_a u \biggr),\qquad u\in\bR.
\end{eqnarray*}
Moreover, $Z=\xi+\eta$, $\eta$ is a compound Poisson process with the
drift $\mu_a$, and $\xi$ is a zero mean martingale with
the variance
$\E\xi^2_t=t\int_{\{|y|<a\}}y^2 \nu(\di y)$.
Due to the linearity of equations (\ref{eqVg}) and (\ref{eqXg}), we
obtain the representation
\[
X^{\g,Z}=X^{\gamma, \xi}+X^{\gamma, \eta}
\]
with
\[
X^{\gamma, \xi}_t=\int_0^t
\bigl(1- \ex^{-\gamma(t-s)}\bigr) \,\di\xi_s,\qquad X^{\gamma, \eta}_t=
\int_0^t\bigl(1- \ex^{-\gamma(t-s)}\bigr) \,\di
\eta_s.
\]
Denote the event
\[
E_\xi:= \biggl\{\omega\dvt\sup_{t\in[0,T]}\bigl|X^{\gamma,\xi}_t\bigr|<
\frac
{\Delta}{4} \biggr\}.
\]
Using the estimate similar to (\ref{eqXZ}) and applying the Doob
inequality to the martingale $\xi$ we obtain for all $\gamma\geq0$
that
\begin{eqnarray*}
\P\bigl( E_\xi^c \bigr)&\leq&\P \biggl(
\sup_{t\in[0,T]}|\xi_t|>\frac{\Delta
}{4} \biggr)\leq
\frac{16T\E|\xi_T|^2 }{\Delta^2}\\
&\leq&\frac{16T}{\Delta^2} \int_{\{|y|<a\}}y^2
\nu(\di y) \leq\frac{\theta}{4}.
\end{eqnarray*}
Thus for all $\omega\in E_\xi$ and for all $\gamma\geq0$ the inequality
\[
\sup_{t\in[0,T]}\bigl|X^{\gamma,Z}_t-X^{\gamma, \eta}_t
\bigr|\leq\sup_{t\in
[0,T]}\bigl|X^{\gamma, \xi}_t \bigr|< \frac{\Delta}{4}
\]
holds true. This implies that for all $\gamma\geq0$ and $0\leq
t_1<t<t_2\leq T$
\[
\bigl|M\bigl(X^{\gamma,Z}_{t_1},X^{\gamma,Z}_t,X^{\gamma,Z}_{t_2}
\bigr) -M\bigl(X^{\gamma, \eta}_{t_1},X^{\gamma, \eta}_t,X^{\gamma, \eta
}_{t_2}
\bigr) \bigr|\leq\frac{\Delta}{2}.
\]

\textit{Step} 2. \textit{Local extrema of $X^{\gamma,\eta}$.}
There exists a level $z=z(a,\theta,T)>0$ such that for the event
\[
E_\eta:= \biggl\{ \sup_{t\in[0,T]}|\eta_t|\leq
\frac{z}{2} \biggr\}
\]
the inequality
\[
\P\bigl(E_\eta^c\bigr)\leq\frac{\theta}{4}
\]
holds.
In particular, this implies that for $\omega\in E_\eta$ the jump sizes
of $\eta$ do not exceed $z$ in absolute value, that is
$\sup_{t\in[0,T]} |\Delta\eta_t(\omega)|\leq z$.
The process $\eta$ has the finite jump intensity
\[
\beta_a=\int_{|y|\geq a}\nu(\di y)<\infty.
\]
For the L\'evy process $\eta$, consider its counting jump process
$N=(N_t)_{t\geq0}$ which is
a Poisson process with
intensity $\beta_a$. Denote by $\{\tau_k\}_{k\geq0}$ the sequence of
arrival times of $\eta$, $\tau_0=0$, and by
$\{J_k\}_{k\geq0}$, $J_0=0$, the sequence of its jump sizes,
$J_k:=\eta_{\tau_k}-\eta_{\tau_k-}$.
It is easy to see that the process $X^{\gamma,\eta}$ has the following
path-wise representation:
%
\begin{equation}
\label{eqxeta} X^{\gamma, \eta}_t=\sum_{k=0}^{N_t }J_k
\bigl(1-\ex^{-\gamma(t-\tau_k)} \bigr) 
+ \mu_a \biggl(t-
\frac{1-\ex^{-\gamma t}}{\gamma} \biggr),\qquad t\geq0.
\end{equation}
We choose $m^*\geq0$ such that
\[
\P\bigl(N_T\leq m^*\bigr)>1-\frac{\theta}{4}.
\]
Further, for $\kappa>0$ and $m=0,\ldots, m^*$ consider the events
\begin{eqnarray*}
C^0_\kappa&:=&C^0=\{\omega\dvt
N_T=0\},
\\
C^m_\kappa&:=& \{\omega\dvt N_T=m\}\\
&&{} \cap \{
\omega\dvt\tau_{k}-\tau_{k-1}\geq2\kappa\mbox{ for } k=1,
\ldots,m\mbox{, and }T-\tau_m\geq2\kappa\},\qquad
1\leq m\leq m^*,
\\
C_\kappa&:=&\bigsqcup_{m=0}^{m^*}C_m
\subset\bigl\{N_T\leq m^*\bigr\}.
\end{eqnarray*}
It is well known (see, e.g., Proposition 3.4 in Sato \cite
{Sato-99}) that conditioned on $\{N_T=m\}$, the jump times
$\tau_1,\ldots,\tau_m$ are distributed on the interval $[0,T]$ with the
probability law of the order statistics obtained from $m$ samples of the
uniform distribution on $[0,T]$.
Thus, we are able to choose $\kappa=\kappa(\theta,\Delta, T,m^*)>0$
small enough, such that
\[
\P(C_\kappa)> 1-\frac{\theta}{3}.
\]
For a fixed $m=0,\ldots,m^*$ consider $\omega\in E_\eta\cap
C^m_\kappa$.
It is easy to see from the representation (\ref{eqxeta}) that
\[
X^{\gamma, \eta}_t(\omega)= \cases{\displaystyle  \mu_a \biggl(t-
\frac{1-\ex^{-\gamma t}}{\gamma} \biggr), \qquad t\in \bigl[0,\tau_1(\omega)\bigr),
\vspace*{2pt}\cr
\displaystyle \sum
_{j=1}^{k}J_j(\omega) \bigl(1-
\ex^{-\gamma(t-\tau
_j(\omega))} \bigr) +\mu_a \biggl(t-\frac{1-\ex^{-\gamma t}}{\gamma} \biggr),
\vspace*{2pt}\cr
\hspace*{103pt}\displaystyle t\in\bigl[\tau_{k-1}(\omega ),\tau_k(\omega)\bigr),  k=2,\ldots,
m,
\vspace*{2pt}\cr
\displaystyle \sum_{j=1}^{m}J_j(
\omega) \bigl(1-\ex^{-\gamma(t-\tau
_j(\omega))} \bigr) + \mu_a \biggl(t-
\frac{1-\ex^{-\gamma t}}{\gamma} \biggr), \vspace*{2pt}\cr
\qquad\hspace*{83pt}t\in \bigl[\tau_{m}(\omega),T\bigr]. }
\]
The process $X^{\gamma,\eta}$ has smooth paths on the intervals
$[\tau_{k-1},\tau_{k})$, $k=1,\ldots,m$, and $[\tau_m, T]$.
We show that for $\gamma$ large enough the paths of $X^{\gamma,\eta}$
are either monotone on these intervals, or have at
most one local extremum on each of the intervals.
Indeed, $X^{\gamma,\eta}$ is obviously monotone on
$t\in[0,\tau_1]$.

Let now $1\leq m\leq m^*$. For
$t\in(\tau_{k},\tau_{k+1})$, $k=1,\ldots,m-1$, and for $t\in(\tau_{m},T)$ consider the
derivative of $X^{\gamma,\eta}$ w.r.t. $t$:
\begin{eqnarray*}
\frac{\di}{\di t}X^{\gamma,\eta}_t
&=&\gamma
\sum_{j=1}^k J_j
\ex^{-\gamma(t-\tau_j)} +\mu_a\bigl(1-\ex^{-\gamma t}\bigr)
\\
&=&\gamma\sum_{j=1}^{k-1} J_j
\ex^{-\gamma(t-\tau_j)} +\gamma J_k\ex^{-\gamma(t-\tau_k)} +\mu_a
\bigl(1-\ex^{-\gamma t}\bigr)
\\
&=& \gamma J_k\ex^{-\gamma(t-\tau_k)} \Biggl(1+ \sum
_{j=1}^{k-1} \frac
{J_j}{J_k}\ex^{-\gamma(\tau_k-\tau_j)}
\Biggr) +\mu_a\bigl(1-\ex^{-\gamma t}\bigr).
\end{eqnarray*}
Taking into account that the jump sizes $J_k$ are bounded,
$|J_k(\omega)|\in[a, z]$ and the arrival times are separated by
$2\kappa$,
$\tau_k-\tau_j\geq2(k-j)\kappa$, $1\leq j\leq k-1$, and $T-\tau_m\geq
2\kappa$, we can
choose a non-random $\gamma_m=\gamma_m(a,z,\kappa, m)$ such that for
$\gamma\geq\gamma_m$
the equation
\begin{eqnarray*}
\frac{\di}{\di t}X^{\gamma,\eta}_t&=& \gamma J_k
\ex^{-\gamma(t-\tau_k)} \Biggl(1+ \sum_{j=1}^{k-1}
\frac
{J_j}{J_k}\ex^{-\gamma(\tau_k-\tau_j)} \Biggr) +\mu_a\bigl(1-
\ex^{-\gamma t}\bigr)=0
\end{eqnarray*}
has at most one solution on each of the intervals $(\tau_{k},\tau_{k+1})$, $k=1,\ldots,m-1$, and on $(\tau_m,T)$.
This unique solution (the local extremum of $X^{\gamma,\eta}$) exists
if and only if $\mu_a\neq0$ and
$\frac{J_k}{\mu_a}<0$, and is located at
%
\begin{eqnarray}
\label{eqt*} t^*_k&=&t^*_k(\gamma)=\frac{1}{\gamma}
\ln \Biggl(1+\gamma\ex^{\gamma
\tau
_k} \biggl|\frac{J_k}{\mu_a} \biggr| \Biggl(1+\sum
_{j=1}^{k-1} J_j\ex^{-\gamma(\tau_k-\tau_j)}
\Biggr) \Biggr)
\nonumber\\[-8pt]\\[-8pt]
&\approx&\tau_{k}+\frac{1}{\gamma}\ln \biggl( \gamma \biggl|
\frac
{J_k}{\mu_a} \biggr| \biggr),\qquad 1\leq k\leq m.
\nonumber
\end{eqnarray}
Moreover, we can choose $\gamma_m$ big enough such that for $\gamma
\geq
\gamma_m$
we have $\tau_k<t^*_k\leq\tau_k+\kappa$ for all
$k=1,\ldots,m$.
Furthermore, we choose $\gamma_m$ big enough such that for $\gamma
\geq
\gamma_m$
%
\begin{eqnarray}
\label{eqw1} \max_{t\in[\tau_{k}+\kappa,\tau_{k+1}]} \biggl|\frac{\di}{\di
t}X^{\gamma,\eta}_t-
\mu_a \biggr|&\leq&\frac{\Delta}{4},\qquad k=1,\ldots, m-1, \quad\mbox{and}
\nonumber\\[-8pt]\\[-8pt]
\max_{t\in[\tau_m+\kappa,T]} \biggl|\frac{\di}{\di t}X^{\gamma,\eta
}_t-
\mu_a \biggr|&\leq& \frac{\Delta}{4}.
\nonumber
\end{eqnarray}
Additionally for $\mu_a\neq0$, we can assume that for $\gamma\geq
\gamma_m$
%
\begin{eqnarray}
\label{eqw2}
\max_{t\in[t^*_k,\tau_{k+1}]} \biggl|\frac{\di}{\di t}X^{\gamma,\eta
}_t
\biggr|&\leq&2|\mu_a| \qquad\mbox{if } \frac{J_k}{\mu_a}<0, k=1,\ldots, m-1,
\quad\mbox{and }
\nonumber\\[-8pt]\\[-8pt]
\max_{t\in[t^*_m,T]} \biggl|\frac{\di}{\di t}X^{\gamma,\eta}_t \biggr|&\leq& 2|
\mu|_a \qquad\mbox{if } \frac{J_m}{\mu_a}<0.
\nonumber
\end{eqnarray}
Overall, for $\gamma\geq\gamma_m$ and for $\omega\in E_\eta\cap
C^m_\kappa$
the paths of $X^{\gamma,\eta}$ have the following structure: they are
continuous on $[0,T]$, smooth on
$(\tau_k,\tau_{k+1})$, $k=0,\ldots, m$, and $(\tau_m,T)$ and may have
extrema either at arrival times
$\tau_k$, $k=1,\ldots, m$, or at time instants $t^*_k$ given by (\ref
{eqt*}) provided $\frac{J_k}{\mu_a}<0$. The slope of
$X^{\gamma,\eta}$ is close to $\mu_a$ on the left-hand neighbourhoods
of the arrival times
$\tau_k$, $k=1,\ldots, m$, and $T$.
The derivative of $X^{\gamma,\eta}$ is bounded by a constant, say
$2|\mu_a|$, in the right-hand neighbourhoods
of the local extrema $t^*_k$.
Let $\gamma^*:=\bigvee_{m=1}^{m^*}\gamma_m$.

\textit{Step} 3. \textit{Estimate of the oscillation function $M$.}
Let $\delta_0\in(0,\kappa\wedge\frac{\Delta}{8(|\mu_a|+1)})$,
$\gamma\geq\gamma^*$, and let
$\omega\in E_\eta\cap C^m_\kappa$ for some $m=0,\ldots, m^*$.

We estimate the value of the oscillation function
$M=M(X^{\gamma,\eta}_{t_1},X^{\gamma,\eta}_t,X^{\gamma,\eta}_{t_2})$
for $0\leq t_1<t<t_2\leq T$ and
$t_2-t_1\leq\delta\leq\delta_0$. Let us consider three cases:

\begin{longlist}[(iii)]
\item[(i)] If the path of $t\mapsto X^{\gamma,\eta}_t(\omega)$ is
monotone on $[t_1,t_2]$, then
$M =0$.

\item[(ii)]
Let $\tau_k\in[t_1,t_2]$ for some $k=1,\ldots,m$, and let $\tau_k$
be a
local extremum. In this case, the maximal value of
$M$ over $t\in[t_1,t_2]$ is attained at $\tau_k$ and
\begin{eqnarray*}
M&\leq&\min\bigl\{ \bigl|X_{\tau_k}^{\gamma,\eta}- X_{t_1}^{\gamma,\eta}\bigr|,
\bigl|X_{\tau
_k}^{\gamma,\eta}- X_{t_2}^{\gamma,\eta}\bigr| \bigr\}
\leq\bigl|X_{\tau_k}^{\gamma,\eta}- X_{t_1}^{\gamma,\eta}\bigr|\\
&\leq&\bigl|X_{\tau_k}^{\gamma,\eta}- X_{\tau_k-\delta_0}^{\gamma,\eta}\bigr|.
\end{eqnarray*}
Then due to (\ref{eqw1})
\[
M\leq \biggl(|\mu_a|+\frac{\Delta}{4} \biggr)\delta_0
\leq\frac
{\Delta}{4}.
\]

\item[(iii)] Let $t\mapsto X^{\gamma,\eta}$ be non-monotone in
$[t_1,t_2]$ and a local extremum $t^*_k$ exist
and belong to $[t_1,t_2]$ for some $k=1,\ldots,m$.
Then we estimate with the help of (\ref{eqw2}) that
\begin{eqnarray*}
M&\leq&\min\bigl\{ \bigl|X_{t^*_k}^{\gamma,\eta}- X_{t_1}^{\gamma,\eta}\bigr|,
\bigl|X_{t^*_k}^{\gamma,\eta}- X_{t_2}^{\gamma,\eta}\bigr| \bigr\}
\leq\bigl|X_{t^*_k}^{\gamma,\eta}-
X_{t^*_k+\delta_0}^{\gamma,\eta}\bigr|\\
&\leq&
2 |\mu_a|\delta_0\leq\frac{\Delta}{4}.
\end{eqnarray*}
\end{longlist}
Overall, these estimates imply, that for all $0<\delta\leq\delta_0$
and $\gamma\geq\gamma^*$
\[
\P \biggl(\mathop{\sup_{0\leq t_1<t<t_2\leq T,}}_{t_2-t_1\leq\delta} M\bigl(X^{\gamma,\eta}_{t_1},X^{\gamma,\eta}_{t},X^{\gamma,\eta}_{t_2}
\bigr)> \frac{\Delta}{2} \Big| E_\eta\cap C_\kappa \biggr)=0
\]
and the inequality (\ref{eqM}) follows:
\begin{eqnarray*}
&&
\P \Bigl(\mathop{\sup_{0\leq t_1<t<t_2\leq T,}}_{t_2-t_1\leq\delta} M\bigl(X^{\gamma,Z}_{t_1},X^{\gamma,Z}_{t},X^{\gamma,Z}_{t_2}
\bigr) >\Delta \Bigr)
\\
&&\quad
\leq\P \biggl( \mathop{
\sup_{0\leq t_1<t<t_2\leq T,}}_{t_2-t_1\leq
\delta} M\bigl(X^{\gamma,\eta}_{t_1},X^{\gamma,\eta}_{t},X^{\gamma,\eta}_{t_2}
\bigr) >\frac{\Delta}{2}, E_\eta\cap C_\kappa \biggr)+\P
\bigl(E_\eta^c\bigr)+\P\bigl(C^c_\kappa
\bigr)+\P\bigl(E_\xi^c\bigr)
\\
&&\quad\leq\frac{\theta}{4}+\frac{\theta}{4}+\frac{\theta}{3}<\theta.\hspace*{282pt}\qed
\end{eqnarray*}
\noqed\end{pf}
\begin{pf*}{Proof of Theorem \ref{tm}}
Consider the equations (\ref{eqv}) and (\ref{eqx}) with zero initial
conditions driven by an $\alpha$-stable
L\'evy process $l^{(\alpha)}$, $\alpha\in(0,2)$. Applying the time
change $t\mapsto\frac{t}{\varepsilon^\alpha}$ and using
the self-similarity of $l^{(\alpha)}$, namely that
$\Law(\varepsilon^{1/\alpha} l^{(\alpha)}_{t/\varepsilon^\alpha},
t\geq0)=\Law(
l^{(\alpha)}_t, t\geq0)$, we obtain that for any $\varepsilon>0$
the law of the processes $(v^\varepsilon_{t/\varepsilon^\alpha
})_{t\geq0}$ and $(x^\varepsilon_{t/\varepsilon^\alpha})_{t\geq0}$ coincides with the law of
the processes $V^{1/\varepsilon^\alpha}$ and $X^{1/\varepsilon
^\alpha}$
which solve the stochastic differential equations (\ref{eqVg}) and
(\ref{eqXg})
driven by a process $L$ being a copy of $l^{(\alpha)}$, $\Law(L)=\Law(
l^{(\alpha)})$.
Then the statement of Theorem \ref{tm} follows
from Theorem \ref{thapp}.
\end{pf*}

Let us discuss the weak convergence of integrated OU processes driven
by an $\alpha$-stable L\'evy process of
small intensity for arbitrary initial conditions.

We start with the generalization of the Theorem \ref{thapp}.
Consider the system of stochastic differential equations driven by an
arbitrary L\'evy process $L$
\[
V^\gamma=v_0-\gamma\int_0^t
A V_s \,\di s+L_t,\qquad X^\gamma =x_0+\gamma
\int_0^t V_s \,\di s,
\]
$\gamma$ being a big parameter, and the initial conditions $v_0$, $x_0$
being arbitrary.
The explicit solutions are given by the formulae
%
\begin{equation}
\label{eqVsol} V^\gamma_t=v_0
\ex^{-\gamma A t}+ \int_0^t \ex^{-\gamma A(t-s)}
\,\di L_s
\end{equation}
and
%
\begin{equation}
\label{eqXsol} AX^\gamma_t=Ax_0+ v_0
\bigl(1-\ex^{-\gamma A t}\bigr) + \int_0^t
\bigl(1- \ex^{-\gamma A(t-s)} \bigr) \,\di L_s.
\end{equation}
It follows immediately from Theorem \ref{thapp} that for $x_0\in\bR$
and $v_0=0$ the processes
$(A(X^\gamma_t-x_0))_{t\geq0}$ converge in probability to $L$ in
$D([0,\infty),\bR;M_1)$.

The situation becomes a little more complicated for $v_0\neq0$. The
continuous second summand on the right-hand side of
(\ref{eqXsol}) does not converge uniformly on the intervals $[0, T]$,
$T>0$, yet has a discontinuous
point-wise limit
\[
v_0 \bigl(1-\ex^{-\gamma A t}\bigr)\to v_0
\bI_{(0,\infty)}(t),\qquad t\geq0.
\]
Thus for $v_0\neq0$, the limiting process $(L_t-v_0\bI_{\{0\}
}(t))_{t\geq0}$
is discontinuous in probability at the origin
and
the convergence in probability of finite-dimensional marginals of the process
$A(X^\gamma-x_0)-v_0$ to those of $L$ holds only on the set $(0,\infty)$.
The $M_1$-convergence to $L$ still holds on all intervals $[\delta,
T]$, $0<\delta<T$, and we obtain the following result:
for arbitrary $x_0,v_0\in\bR$
\[
\bigl(A\bigl(X^\gamma_t-x_0\bigr)-v_0
\bigr)_{t\geq0}\stackrel{\P} {\to} L \qquad\mbox{in } D\bigl((0,\infty),
\bR;M_1\bigr) \mbox{ as } \gamma\to\infty.
\]
Consequently, Theorem \ref{tm} takes the following form. Let
$l^{(\alpha)}$ be an $\alpha$-stable L\'evy process and let
$x^\varepsilon$ be the integrated OU process satisfying equations
(\ref{eqv})
and (\ref{eqx}) with arbitrary initial conditions
$x_0$, $v_0\in\bR$. Then
\[
\bigl(A\bigl(x^\varepsilon_{{t}/{\varepsilon^\alpha}}-x_0
\bigr)-v_0 \bigr)_{t\geq0} \Rightarrow l^{(\alpha)}
\qquad\mbox{in } D\bigl((0,\infty),\bR;M_1\bigr) \mbox{ as } \varepsilon\to0.
\]

Finally, we direct the reader's attention to Section 13.6.2 in Whitt
\cite{whitt02}, and Puhalskii and Whitt \cite{PuhWhi97}
for more information on the treatment of discontinuities of stochastic
processes at the origin, especially on the
so-called $M_1'$-convergence.

\section*{Acknowledgements}

I. Pavlyukevich thanks Goran Peskir for an interesting discussion about the
first passage times of (integrated)
Ornstein--Uhlenbeck processes, and Markus Riedle for various helpful
comments. The authors are indebted to the anonymous referee
for a careful reading of the manuscript, pointing out a shorter proof
of Proposition \ref{pbr}, and making
numerous valuable suggestions which significantly improved the quality
of this paper.



\printhistory


\begin{thebibliography}{18}

\bibitem{Adler90}
\begin{bbook}[mr]
\bauthor{\bsnm{Adler},~\bfnm{Robert~J.}\binits{R.J.}}
(\byear{1990}).
\btitle{An Introduction to Continuity, Extrema, and Related Topics for General
  {G}aussian Processes}.
\bseries{Institute of Mathematical Statistics Lecture Notes---Monograph Series}
\bvolume{12}.
\blocation{Hayward, CA}: \bpublisher{IMS}.
\bid{mr={1088478}}
\bptok{imsref}%
\end{bbook}
\endbibitem

\bibitem{altalibi10}
\begin{bmisc}[author]
\bauthor{\bsnm{Al-Talibi},~\bfnm{H.}\binits{H.}},
  \bauthor{\bsnm{Hilbert},~\bfnm{A.}\binits{A.}} \AND
  \bauthor{\bsnm{Kolokoltsov},~\bfnm{V.}\binits{V.}}
(\byear{2010}).
\bhowpublished{Nelson-type limit for a particular class of {L{\'e}}vy processes.
\textit{AIP Conf. Proc.}
\textbf{1232} 189}.
\bptok{imsref}%
\end{bmisc}
\endbibitem

\bibitem{Applebaum09}
\begin{barticle}[author]
\bauthor{\bsnm{Applebaum},~\bfnm{D.}\binits{D.}}
(\byear{2009}).
\btitle{Extending stochastic resonance for neuron models to general {L{\`e}}vy
  noise}.
\bjournal{IEEE Transactions on Neural Networks}
\bvolume{20}
\bpages{1993--1995}.
\bptok{imsref}%
\end{barticle}
\endbibitem

\bibitem{Bar98}
\begin{barticle}[mr]
\bauthor{\bsnm{Barndorff-Nielsen},~\bfnm{Ole~E.}\binits{O.E.}}
(\byear{1998}).
\btitle{Processes of normal inverse {G}aussian type}.
\bjournal{Finance Stoch.}
\bvolume{2}
\bpages{41--68}.
\bid{doi={10.1007/s007800050032}, issn={0949-2984}, mr={1804664}}
\bptok{imsref}%
\end{barticle}
\endbibitem

\bibitem{BarShe03}
\begin{barticle}[mr]
\bauthor{\bsnm{Barndorff-Nielsen},~\bfnm{Ole~E.}\binits{O.E.}} \AND
  \bauthor{\bsnm{Shephard},~\bfnm{Neil}\binits{N.}}
(\byear{2003}).
\btitle{Integrated {OU} processes and non-{G}aussian {OU}-based stochastic
  volatility models}.
\bjournal{Scand. J. Stat.}
\bvolume{30}
\bpages{277--295}.
\bid{doi={10.1111/1467-9469.00331}, issn={0303-6898}, mr={1983126}}
\bptok{imsref}%
\end{barticle}
\endbibitem

\bibitem{ChechkinGS02}
\begin{barticle}[author]
\bauthor{\bsnm{Chechkin},~\bfnm{A.~V.}\binits{A.V.}},
  \bauthor{\bsnm{Gonchar},~\bfnm{V.~Yu.}\binits{V.Y.}} \AND
  \bauthor{\bsnm{Szyd{\l}owski},~\bfnm{M.}\binits{M.}}
(\byear{2002}).
\btitle{Fractional kinetics for relaxation and superdiffusion in a magnetic
  field}.
\bjournal{Physics of Plasmas}
\bvolume{9}
\bpages{78--88}.
\bptok{imsref}%
\end{barticle}
\endbibitem

\bibitem{GarOlk00}
\begin{barticle}[mr]
\bauthor{\bsnm{Garbaczewski},~\bfnm{Piotr}\binits{P.}} \AND
  \bauthor{\bsnm{Olkiewicz},~\bfnm{Robert}\binits{R.}}
(\byear{2000}).
\btitle{Ornstein--{U}hlenbeck--{C}auchy process}.
\bjournal{J.~Math. Phys.}
\bvolume{41}
\bpages{6843--6860}.
\bid{doi={10.1063/1.1290054}, issn={0022-2488}, mr={1781410}}
\bptok{imsref}%
\end{barticle}
\endbibitem

\bibitem{Hesse91}
\begin{barticle}[mr]
\bauthor{\bsnm{Hesse},~\bfnm{C.~H.}\binits{C.H.}}
(\byear{1991}).
\btitle{The one-sided barrier problem for an integrated {O}rnstein--{U}hlenbeck
  process}.
\bjournal{Comm. Statist. Stochastic Models}
\bvolume{7}
\bpages{447--480}.
\bid{doi={10.1080/15326349108807200}, issn={0882-0287}, mr={1123138}}
\bptok{imsref}%
\end{barticle}
\endbibitem

\bibitem{HorLef84}
\begin{bbook}[mr]
\bauthor{\bsnm{Horsthemke},~\bfnm{Werner}\binits{W.}} \AND
  \bauthor{\bsnm{Lefever},~\bfnm{Ren{\'e}}\binits{R.}}
(\byear{1984}).
\btitle{Noise-Induced Transitions: Theory and Applications in Physics, Chemistry, and Biology}.
\bseries{Springer Series in Synergetics}
\bvolume{15}.
\blocation{Berlin}: \bpublisher{Springer}.
\bid{mr={0724433}}
\bptok{imsref}%
\end{bbook}
\endbibitem

\bibitem{Kuznetsov11}
\begin{barticle}[mr]
\bauthor{\bsnm{Kuznetsov},~\bfnm{Alexey}\binits{A.}}
(\byear{2011}).
\btitle{On extrema of stable processes}.
\bjournal{Ann. Probab.}
\bvolume{39}
\bpages{1027--1060}.
\bid{doi={10.1214/10-AOP577}, issn={0091-1798}, mr={2789582}}
\bptok{imsref}%
\end{barticle}
\endbibitem

\bibitem{Lef89}
\begin{barticle}[mr]
\bauthor{\bsnm{Lefebvre},~\bfnm{Mario}\binits{M.}}
(\byear{1989}).
\btitle{Moment generating function of a first hitting place for the integrated
  {O}rnstein--{U}hlenbeck process}.
\bjournal{Stochastic Process. Appl.}
\bvolume{32}
\bpages{281--287}.
\bid{doi={10.1016/0304-4149(89)90080-X}, issn={0304-4149}, mr={1014454}}
\bptok{imsref}%
\end{barticle}
\endbibitem

\bibitem{PuhWhi97}
\begin{barticle}[mr]
\bauthor{\bsnm{Puhalskii},~\bfnm{Anatolii~A.}\binits{A.A.}} \AND
  \bauthor{\bsnm{Whitt},~\bfnm{Ward}\binits{W.}}
(\byear{1997}).
\btitle{Functional large deviation principles for first-passage-time
  processes}.
\bjournal{Ann. Appl. Probab.}
\bvolume{7}
\bpages{362--381}.
\bid{doi={10.1214/aoap/1034625336}, issn={1050-5164}, mr={1442318}}
\bptok{imsref}%
\end{barticle}
\endbibitem

\bibitem{Sato-99}
\begin{bbook}[mr]
\bauthor{\bsnm{Sato},~\bfnm{Ken}\binits{K.}}
(\byear{1999}).
\btitle{L\'evy Processes and Infinitely Divisible Distributions}.
\bseries{Cambridge Studies in Advanced Mathematics}
\bvolume{68}.
\blocation{Cambridge}: \bpublisher{Cambridge Univ. Press}.
\bnote{Translated from the 1990 Japanese original, revised by the author}.
\bid{mr={1739520}}
\bptok{imsref}%
\end{bbook}
\endbibitem

\bibitem{Simon11}
\begin{barticle}[mr]
\bauthor{\bsnm{Simon},~\bfnm{Thomas}\binits{T.}}
(\byear{2011}).
\btitle{Hitting densities for spectrally positive stable processes}.
\bjournal{Stochastics}
\bvolume{83}
\bpages{203--214}.
\bid{doi={10.1080/17442508.2010.549232}, issn={1744-2508}, mr={2800088}}
\bptok{imsref}%
\end{barticle}
\endbibitem

\bibitem{Skorokhod56}
\begin{barticle}[mr]
\bauthor{\bsnm{Skorohod},~\bfnm{A.~V.}\binits{A.V.}}
(\byear{1956}).
\btitle{Limit theorems for stochastic processes}.
\bjournal{Theory Probab. Appl.}
\bvolume{1}
\bpages{261--290}.
\bid{mr={0084897}}
\bptok{imsref}%
\end{barticle}
\endbibitem

\bibitem{whitt02}
\begin{bbook}[mr]
\bauthor{\bsnm{Whitt},~\bfnm{Ward}\binits{W.}}
(\byear{2002}).
\btitle{Stochastic-Process Limits:
An Introduction to Stochastic-Process Limits and their Application to
  Queues}.
\bseries{Springer Series in Operations Research}.
\blocation{New York}: \bpublisher{Springer}.
\bid{mr={1876437}}
\bptok{imsref}%
\end{bbook}
\endbibitem

\bibitem{Williams01}
\begin{barticle}[mr]
\bauthor{\bsnm{Williams},~\bfnm{David R.~E.}\binits{D.R.E.}}
(\byear{2001}).
\btitle{Path-wise solutions of stochastic differential equations driven by
  {L}\'evy processes}.
\bjournal{Rev. Mat. Iberoam.}
\bvolume{17}
\bpages{295--329}.
\bid{doi={10.4171/RMI/296}, issn={0213-2230}, mr={1891200}}
\bptok{imsref}%
\end{barticle}
\endbibitem

\end{thebibliography}
\end{document}